\documentclass[11pt]{article}
\usepackage{amsmath,amsfonts,amsthm,amssymb, mathtools}
\usepackage{mathrsfs, graphicx,color,latexsym, tikz, calc,
}
\usepackage{tikz-cd}
\usepackage{graphicx}
\usepackage{epstopdf}
\usepackage{enumerate}
\usepackage{caption}
\usepackage{stmaryrd}
\usepackage{xcolor}
\usepackage{hyperref}
\hypersetup{
    colorlinks=true,
    citecolor=blue,
    linkcolor=blue,
    urlcolor=blue
}
\usepackage{cite}

\usetikzlibrary{shadows}
\usetikzlibrary{patterns,arrows,decorations.pathreplacing}

\usepackage[margin=2.5cm]{geometry}
\usepackage{ulem}

\newtheorem{theorem}{Theorem}[section]

\newtheorem{lemma}[theorem]{Lemma}

\newtheorem{prop}[theorem]{Proposition}

\allowdisplaybreaks[4]

\title{\bf \Large }
\date{\today }
\title{{\bf \Large 
 Random partition for Tokushige's $r$-wise intersecting conjecture}\footnote{ Lihua Feng was supported by the NSFC (Nos. 12271527 and 12471022) and NSF of Qinghai Province (No. 2025-ZJ-902T). E-mail addresses: \url{wuyjmath@163.com} (Y. Wu), \url{fenglh@163.com} (L. Feng). }
\author{
{\small  Yongjiang Wu,\ \ Lihua Feng\footnote{Corresponding author}
}\\[2mm]
\small School of Mathematics and Statistics, HNP-LAMA, Central South University\\
 \small Changsha, Hunan, 410083, China\\ 
}}

\begin{document}
\maketitle
\begin{abstract}   
Let $r\ge 3$ and let $1>p_1\ge p_2\ge\cdots\ge p_n>0$.  
Let $\mu_{\mathbf p}$ denote the product measure on $2^{[n]}$ where each coordinate $i$ is included independently with probability $p_i$. A family $\mathcal A\subseteq 2^{[n]}$ is  $r$-wise intersecting if $A_1\cap\cdots\cap A_r\neq\emptyset$ for all $A_1,\ldots,A_r\in\mathcal A$. 
In 2022, Tokushige proved that if
$p_2<\frac{r-1}{r}$, then every $r$-wise intersecting family
$\mathcal{A}\subseteq 2^{[n]}$ satisfies $\mu_{\mathbf p}(\mathcal{A})\le p_1$, with equality only for
stars centred at coordinates of maximum probability.  He conjectured that the
hypothesis $p_2<\frac{r-1}{r}$ can be replaced by $p_{r+1}<\frac{r-1}{r}$.  In this paper, we prove this
conjecture in full.  The key novelty is the introduction of a new random partition  method, which reduces the problem to at most $r$ coordinates and solves it exactly, thereby fully covering all cases with multiple supercritical coordinates.
\end{abstract}

{\bf AMS Classification}:  05D05; 05C65 

{\bf Keywords}: $r$-wise intersecting family; Product measure; Random partition

\section{Introduction}
For a probability vector $\mathbf{p}=(p_1,\ldots,p_n)\in(0,1)^n$ and a family $\mathcal{A}\subseteq 2^{[n]}$, define the \textit{$\mathbf{p}$-biased measure}
$$
\mu_{\mathbf{p}}(\mathcal{A})=\sum_{A\in\mathcal{A}}\prod_{i\in A}p_i\prod_{j\notin A}(1-p_j).
$$
When $p_1=\cdots=p_n=p$, this is the usual \textit{$p$-biased measure}, denoted by $\mu_p$.
A family $\mathcal{A}\subseteq 2^{[n]}$ is called \textit{$r$-wise $t$-intersecting} if
$$
|A_1\cap\cdots\cap A_r|\geq t\qquad\text{for all }A_1,\ldots,A_r\in\mathcal{A}.
$$
When $t=1$, this is the usual \textit{$r$-wise intersecting family}; when $r=2$, it is the usual \textit{$t$-intersecting family}; when $r=2$ and $t=1$, it is simply \textit{intersecting}. 
The \textit{star} centred at $i$ is
$$
 \mathcal{S}_i=\{A\subseteq[n]:i\in A\}.
$$ 
It is $r$-wise intersecting and has measure $\mu_{\mathbf{p}}(\mathcal{S}_i)=p_i$.

The $p$-biased measure formulation of intersection problems has been extensively studied. For the uniform bias $0<p<1$, Friedgut \cite{F08} developed a spectral approach, showing that for $p\le \frac{1}{t+1}$, every $t$-intersecting family satisfies $\mu_p(\mathcal{A})\le p^t$. The non-uniform version was introduced by Fishburn, Frankl, Freed, Lagarias and Odlyzko \cite{F86}, who proved that if $p_1\ge\cdots\ge p_n$ and $p_2\le \frac{1}{2}$, then every intersecting family satisfies $\mu_{\mathbf{p}}(\mathcal{A})\le p_1$. Suda, Tanaka and Tokushige \cite{S17} later conjectured that the condition $p_2\le \frac{1}{2}$ can be relaxed to $p_3\le \frac{1}{2}$. Tokushige \cite{T22} proved this conjecture under the additional assumptions $p_1\le 1/2$ or $1-p_2>p_3$, using the high-dimensional Hoffman bound of Filmus, Golubev and Lifshitz \cite{F21}. Very recently, Wu and Feng \cite{W26} confirmed the full conjecture using the generating set method, and moreover established a natural $t$-intersecting generalization.

For $r$-wise intersecting families with $r\ge 3$, Frankl and Tokushige \cite{F03} studied the uniform bias case, proving that if $p\leq\frac{r-1}{r}$, then every $r$-wise intersecting family satisfies $\mu_p(\mathcal A)\le p$. Filmus, Golubev and Lifshitz \cite{F21} later gave an alternative proof of this result using the high-dimensional Hoffman bound for hypergraphs. The related $r$-cross intersecting setting was also studied recently by Chang, Liu and Liu \cite{C26}. Tokushige \cite{T22} extended the Frankl-Tokushige theorem to the non-uniform setting.

\begin{theorem}[Tokushige \cite{T22}]\label{T22}
Let $r\ge 3$ and let $1>p_1\ge\cdots\ge p_n>0$. If $p_2<\frac{r-1}{r}$ and $\mathcal A\subseteq 2^{[n]}$ is $r$-wise intersecting, then
$$
\mu_{\mathbf p}(\mathcal A)\le p_1.
$$
Moreover, equality holds if and only if $\mathcal A=\mathcal S_i$ for some $i\in[n]$ with $p_i=p_1$.
\end{theorem}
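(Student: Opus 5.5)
We will prove Theorem~\ref{T22} by a coupling argument that reduces to the uniform Frankl--Tokushige theorem. As preliminary reductions, we may assume $\mathcal A$ is an up-set, since taking the up-closure preserves the $r$-wise intersecting property and does not decrease $\mu_{\mathbf p}$. We may also apply the standard shifting operations $S_{ij}$ for $i<j$ (moving $j$ to $i$). When $p_i\ge p_j$, these preserve the $r$-wise intersecting property and do not decrease the $\mathbf p$-measure. Hence we may assume $\mathcal A$ is shifted with respect to the order of decreasing $p_i$.

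The core step is a random-partition (splitting) argument. Set $q=\frac{r-1}{r}$. Coordinate $1$ is special, because $p_1$ may exceed $q$, while every other $p_i\le p_2<q$. Condition on whether $1\in A$, and write
\[
\mathcal A(1)=\{A\setminus\{1\}:1\in A\in\mathcal A\},\qquad \mathcal A(\bar 1)=\{A:1\notin A\in\mathcal A\}.
\]
Then
\[
\mu_{\mathbf p}(\mathcal A)=p_1\,\mu'(\mathcal A(1))+(1-p_1)\,\mu'(\mathcal A(\bar1)),
\]
where $\mu'$ is the product measure on $2^{[2,n]}$. Since $\mu'(\mathcal A(1))\le 1$, it suffices to show
\[
(1-p_1)\,\mu'(\mathcal A(\bar1))\le p_1\bigl(1-\mu'(\mathcal A(1))\bigr).
\]
The family $\mathcal A(\bar1)$ is $r$-wise intersecting. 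More strongly, $\mathcal A(\bar1)$ together with $\mathcal A(1)$ satisfies a cross condition: any $s\ge1$ sets from $\mathcal A(\bar1)$ and $r-s$ sets from $\mathcal A(1)$ have a common element.

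To handle non-uniform $p_i\le p_2<q$ on $[2,n]$, we use the standard monotone-coupling trick. Each $p_i$-biased coordinate is realised as an OR of independent $q$-ish blocks, or equivalently we replace coordinate $i$ by several new coordinates of bias $\le q$. This converts $\mu'$ into a uniform $\mu_{p_2}$ measure on a blown-up ground set, up to an arbitrarily small error. The $r$-wise intersecting and cross-intersecting properties persist under pulling back along a monotone map. On that blown-up ground set we invoke the uniform $r$-cross-intersecting version of Frankl--Tokushige (valid for $p\le q$), which gives inequalities of the form
\[
\mu(\mathcal F_1)\cdots\mu(\mathcal F_r)\le p^r \quad\text{(or a suitable weighted version)}.
\]
Applied with $\mathcal F$'s equal to $\mathcal A(\bar1)$ and $\mathcal A(1)$, this yields a bound $\mu'(\mathcal A(\bar1))\le f(\mu'(\mathcal A(1)))$.

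The main obstacle is the final inequality. We must combine $\mu'(\mathcal A(\bar1))\le f(\mu'(\mathcal A(1)))$ with the target $(1-p_1)x\le p_1(1-y)$ for every $p_1<1$, including $p_1$ close to $1$. The difficulty is that for large $p_1$ only a very strong cross-intersection bound suffices. I would first try the simple case analysis: if $\mathcal A(\bar1)=\emptyset$, then $\mathcal A\subseteq\mathcal S_1$ and we are done. Otherwise, fix $B\in\mathcal A(\bar1)$. Then every $(r-1)$-tuple from $\mathcal A(1)$ must meet inside $B$, so $\mathcal A(1)$ restricted appropriately is $(r-1)$-wise intersecting on a set avoiding the complement of $B$. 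This forces $1-\mu'(\mathcal A(1))\ge$ something like $\prod_{i\in B}(1-p_i)$-weighted quantities, which should dominate $\mu'(\mathcal A(\bar1))$ using $p_i<q$ and the uniform theorem inductively on $r$. Equality tracking through each inequality, with strictness coming from $p_2<q$, then forces $\mathcal A(\bar1)=\emptyset$ and $\mathcal A(1)=2^{[2,n]}$, i.e.\ $\mathcal A=\mathcal S_1$, or the symmetric case for a coordinate $i$ with $p_i=p_1$.
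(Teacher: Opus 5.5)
The paper does not prove this theorem. It quotes it from Tokushige and uses it as a black box in Case~1 of the proof of Theorem \ref{M1}, so your argument has to stand on its own. It does not yet, and the gap is at the step you flag as the main obstacle.

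\textbf{The product bound cannot give the key inequality.} Write $x=\mu'(\mathcal A(\bar1))$ and $y=\mu'(\mathcal A(1))$. The inequality $(1-p_1)x\le p_1(1-y)$ is never proved. A product bound $\prod_i\mu(\mathcal F_i)\le p_2^r$, applied to $s$ copies of $\mathcal A(\bar1)$ and $r-s$ copies of $\mathcal A(1)$, gives only $x^sy^{r-s}\le p_2^r$. At $y=1$ this still permits $x=p_2^r>0$, while the target forces $x=0$ when $y=1$. So no $f$ obtained this way can finish the proof, and your last paragraph (``something like'', ``should dominate'') is a heuristic.

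\textbf{Your diagnosis is inverted.} $x$ and $y$ do not depend on $p_1$, and $p_1/(1-p_1)$ is increasing. So the target becomes \emph{weaker} as $p_1\to1$, and the hardest case is $p_1=p_2$.

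\textbf{The blow-up step is wrong as stated.} An OR of independent $p_2$-bits has bias at least $p_2$, so it cannot realise $p_i<p_2$. Pulling back along an OR gadget also destroys intersection: if $i\in\phi(X)$ iff $X\cap\{i',i''\}\ne\emptyset$, then $X_1\ni i'$ and $X_2\ni i''$ may be disjoint although both images contain $i$. The step is unnecessary anyway. For an up-set you can simply raise $p_3,\dots,p_n$ to $p_2$; this does not decrease $\mu_{\mathbf p}(\mathcal A)$ and leaves the target $p_1$ unchanged.

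\textbf{The missing idea is short.} After raising the tail to $p_2$, recompute $x,y$ with respect to $\mu_{p_2}$ on $2^{[2,n]}$. Then $g(\theta)=(1-\theta)x+\theta y-\theta$ is affine in $\theta$ with slope $y-x-1\le0$. Hence $\mu_{\mathbf p}(\mathcal A)-p_1\le g(p_1)\le g(p_2)=\mu_{p_2}(\mathcal A)-p_2\le 0$, by the uniform Frankl--Tokushige theorem, since $p_2<\frac{r-1}{r}$.

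For equality there are two cases.
\begin{itemize}
\item If $p_1>p_2$, the slope must vanish, so $y=1$ and $x=0$, i.e.\ $\mathcal A=\mathcal S_1$.
\item If $p_1=p_2$, you need the uniqueness part of the uniform theorem. It gives $\mathcal A=\mathcal S_i$, and then $p_i=\mu_{\mathbf p}(\mathcal S_i)=p_1$.
\end{itemize}
Your split at coordinate $1$ cannot detect a star centred at another $i$ with $p_i=p_1$, so your ``symmetric case'' needs exactly this input.

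\textbf{Alternative route via the paper's lemma.} The strong cross bound you were looking for is additive, not multiplicative. Apply Lemma \ref{lem:additive} to $\mathcal A(\bar1),\mathcal A(1),\dots,\mathcal A(1)$. These are $r$-cross intersecting by your own observation with $s=1$, and they live on coordinates of probability $<\frac{r-1}{r}$. This gives $x+(r-1)y\le r-1$, hence $(1-p_1)x\le(r-1)(1-p_1)(1-y)\le p_1(1-y)$ whenever $p_1\ge\frac{r-1}{r}$; this is the case $|H|=1$ of Proposition \ref{prop:slicebound}. For $p_1<\frac{r-1}{r}$, raise every coordinate to $p_1$ and apply the uniform theorem directly.
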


Tokushige further conjectured that the hypothesis $p_2<\frac{r-1}{r}$ can be replaced by $p_{r+1}<\frac{r-1}{r}$. In this paper, we prove this conjecture in full.

\begin{theorem}\label{M1}
Let $r\ge3$ and let $1>p_1\ge\cdots\ge p_n>0$, and use the convention $p_i=0$ for $i>n$. Suppose that
$
p_{r+1}<\frac{r-1}{r}.
$
If $\mathcal{A}\subseteq2^{[n]}$ is $r$-wise intersecting, then
$$
\mu_{\mathbf{p}}(\mathcal{A})\le p_1.
$$
Moreover, equality holds if and only if $\mathcal{A}=\mathcal{S}_i$ for some $i\in[n]$ with $p_i=p_1$.
\end{theorem}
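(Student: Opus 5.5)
Let $m\le r$ be the number of coordinates with $p_i\ge\frac{r-1}{r}$; by hypothesis these are among $1,\dots,r$, and every $j>m$ has $p_j<\frac{r-1}{r}$. Write $H=[m]$ for the heavy coordinates and $L=[n]\setminus H$ for the light ones. If $m\le 1$ we are in the situation of Theorem~\ref{T22}, so assume $2\le m\le r$. The plan is to condition on the trace on $H$: for each $S\subseteq H$ let $\mathcal A_S=\{B\subseteq L: S\cup B\in\mathcal A\}$, so that
\[
\mu_{\mathbf p}(\mathcal A)=\sum_{S\subseteq H}\Big(\prod_{i\in S}p_i\prod_{i\in H\setminus S}(1-p_i)\Big)\mu_{\mathbf p_L}(\mathcal A_S).
\]
Whenever $S_1\cap\cdots\cap S_r=\emptyset$, the families $\mathcal A_{S_1},\dots,\mathcal A_{S_r}$ are $r$-cross intersecting on $L$, where all biases are below $\frac{r-1}{r}$.

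The random partition step: for the light part I would not work with a full cross-intersecting inequality. Instead I would randomly split $L$ into $r$ classes $L_1,\dots,L_r$, or more precisely assign each light coordinate $j$ a random label determining how its ``absence mass'' $1-p_j\ge \frac1r$ is shared among $r$ virtual copies. The aim is a coupling in which each light coordinate can be covered by $r$ sets missing it, so that with positive weight any $r$-tuple of light parts has empty common intersection. The outcome should be a reduction lemma: the vector $x_S=\mu_{\mathbf p_L}(\mathcal A_S)\in[0,1]$ satisfies the linear constraints $\sum_{k}x_{S_k}\le r-1$ whenever $\bigcap_k S_k=\emptyset$. This is the analogue of the Frankl--Tokushige bound $\mu_p\le p$ at $p=\frac{r-1}{r}$, and its proof by averaging over the random partition is exactly where the condition $p_j<\frac{r-1}{r}$ enters. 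Strictness gives $<r-1$ unless the families are trivial. Also $x$ is monotone in $S$ after standard shifting/up-closure.

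This leaves a finite linear program on $2^{H}$ with $|H|\le r$: maximize $\sum_S w(S)x_S$ with $w(S)=\prod_{S}p_i\prod_{H\setminus S}(1-p_i)$, subject to $0\le x_S\le1$, monotonicity, and the constraint that $\sum_k x_{S_k}\le r-1$ for every $r$ subsets with empty intersection. Since $|H|\le r$, the sets $H\setminus\{i\}$ for $i\in H$, padded with copies of $H$, give admissible $r$-tuples; this is the crucial use of $m\le r$. I would solve the LP exactly by exhibiting a dual certificate: combine the constraints for tuples of the form $(S_1,\dots,S_r)$ formed from complements of singletons to show $\sum_S w(S)x_S\le p_1$. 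The value $p_1$ is attained by $x_S=\mathbf 1[1\in S]$. For the equality analysis, tightness forces $x_S\in\{0,1\}$ pattern of a star on $H$ and forces the light families to be empty or full. When some $p_j=p_1$ with $j\in L$, stars on $L$ also arise, which is handled by the equality case of the reduction lemma.

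The main obstacle is the exact LP step for multiple heavy coordinates. The natural dual weights depend on $p_1,\dots,p_m$ in a nonlinear way, and one must verify that they are nonnegative exactly when every light coordinate is below $\frac{r-1}{r}$ while the heavy ones are arbitrary in $(0,1)$. I would first try an induction on $m$: fix coordinate $m$, and split $x$ into the parts with $m\in S$ and $m\notin S$. The $m\notin S$ part behaves like an instance with one fewer heavy coordinate but with an extra ``cross'' constraint available, because $m$ can be excluded by one more set. Showing that this extra constraint exactly compensates for $p_m\ge\frac{r-1}{r}$ is the delicate computation.
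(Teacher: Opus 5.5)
\textbf{There is a genuine gap.} It sits where you say the main obstacle is. The finite linear program on $2^H$ is the whole content of the theorem, and the proposal does not solve it.

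Your reduction agrees with the paper: slicing over $H$, and the constraint $\sum_k x_{S_k}\le r-1$ whenever $S_1\cap\cdots\cap S_r=\emptyset$. One correction: the coupling must make $X_1\cap\cdots\cap X_r$ empty almost surely, not just ``with positive weight.'' The paper lets each light $j$ pick a random proper subset $S_j\subsetneq[r]$ with $\mathbb P(i\in S_j)=p_j$.

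The certificate you suggest, built from complements of singletons, cannot work. This holds even if you allow one arbitrary set alongside sets $H\setminus\{h\}$ and copies of $H$. Take $r=m=4$ with all heavy biases equal to $p$. The monotone point $x_H=1$, $x_S=\frac23$ for $|S|\in\{2,3\}$, $x_S=\frac13$ for $|S|=1$, $x_\emptyset=0$ satisfies every such constraint, yet
\[
\sum_{S\subseteq H}p^{|S|}q^{4-|S|}x_S-p=\frac{pq}{3}\bigl(q^2+3pq-p^2\bigr)>0\qquad\text{for }\tfrac34<p<\tfrac{1+\sqrt{13}}{6}.
\]
It violates only the constraints $(\{a,b\},\{c,d\},H,H)$ with $\{a,b,c,d\}=H$. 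There $\{a,b\}=H\setminus\{c,d\}$ is the complement of a two-element block. Your induction on $m$ is not carried out.

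The paper's route has three steps.
\begin{enumerate}
\item Since $\mathcal A$ may be taken increasing, raise all heavy coordinates to $p_1$ by monotonicity. This removes the nonlinear dependence on $p_1,\dots,p_m$ that you worry about.
\item Compare with $\mathcal S_1$ through $x_B=w_B$ and $y_D=1-w_{H\setminus D}$ for $B,D\subseteq U=H\setminus\{1\}$. Use the local inequality $x_B\le\sum_{j=1}^{r-1}y_{D_j}$ for every cover $D_1\cup\cdots\cup D_{r-1}=B$.
\item Average over a random set partition of $B$ with $\mathbb P(D\in\Pi)=t^{|D|-1}(1-t)^{|B|-|D|}$, where $t=q/p\le\frac1{r-1}$. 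This uses blocks of every size, not just singletons. Weighting by $tp^{|B|}q^{|U|-|B|}$ and summing gives $\mu(\mathcal A)\le p-p\bigl(1-(r-1)t\bigr)(1-w_H)$.
\end{enumerate}

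\textbf{Your equality analysis also fails as stated.} LP tightness does not force a $0/1$ star pattern. For $r=m=3$ and any common heavy bias $p$, the feasible point $x_H=1$, $x_S=\frac23$ for $|S|=2$, $x_S=\frac13$ for $|S|=1$, $x_\emptyset=0$ has value $p^3+2p^2q+pq^2=p$.

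The paper needs information that the LP relaxation discards.
\begin{itemize}
\item The slack term forces $w_H=1$, i.e.\ $\emptyset\in\mathcal A_H=2^T$.
\item Hence the high traces $\{B:\mathcal A_B\ne\emptyset\}$ form an $(r-1)$-wise intersecting upset in $2^H$. This rules out the point above, since $\{1\}$ and $\{2\}$ are disjoint.
\item This upset has a common point when $|H|\le r-1$. For $|H|=r$, the single exception $\{H\}\cup\{H\setminus\{h\}:h\in H\}$ is excluded by $p^r+(r-1)p^{r-1}q<p$.
\end{itemize}

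At $p_1=\frac{r-1}{r}$ the slack coefficient $1-(r-1)t$ vanishes. Your convention $p_i\ge\frac{r-1}{r}$ puts this boundary case inside the LP. It needs the separate argument the paper gives, via strict monotonicity in the light coordinates.

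Finally, your remark about light coordinates with $p_j=p_1$ is vacuous once $m\ge2$.
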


The new difficulty is the case in which several coordinates have probability larger than $\frac{r-1}{r}$. 
Our proof introduces a random partition method that solves the resulting finite linear program on at most $r$ coordinates. The proof has two main components. We first replace the given family by its upset, allowing us to assume monotonicity and to raise all supercritical coordinates to the common maximum probability without decreasing the measure. We then apply a critical coupling, in the spirit of Chang, Liu and Liu \cite{C26}, to absorb all subcritical coordinates, namely those whose probabilities are at most $\frac{r-1}{r}$, into a single additive bound. This leaves only the supercritical coordinates, of which there are at most $r$ by the hypothesis $p_{r+1}<\frac{r-1}{r}$. The resulting problem is a small linear program on these remaining coordinates, which we solve explicitly via a random partition argument. This gives a clean and self-contained proof of the extremal bound. The equality case requires additional work: we show that equality forces the top slice to be the whole tail cube, which in turn implies that the supports of the remaining slices form an $(r-1)$-wise intersecting family. This extra set-theoretic information collapses the equality face to a single common coordinate.

This paper is organized as follows. Section \ref{se2} collects the preliminary tools, including the upset reduction and the critical additive bound. Section \ref{se3} introduces the slicing argument that reduces the problem to a finite linear program on the supercritical coordinates, and proves the key supercritical slice inequality using a random partition argument. Section  \ref{se4} presents the collapse lemma and completes the proof of the main theorem.

\section{Basic tools}\label{se2}

 A family $\mathcal{A}\subseteq 2^{[n]}$ is called \textit{increasing} if it is closed under taking supersets: whenever $A\in\mathcal A$ and $A\subseteq B$, we have $B\in\mathcal{A}$. The \textit{upset} $\mathcal A^\uparrow$ of $\mathcal{A}$ is  defined by
$$
\mathcal{A}^\uparrow=\{B\subseteq[n]:\text{there exists }A\in\mathcal{A}\text{ such that }A\subseteq B\}.
$$
We shall use the following standard facts about increasing families.

\begin{lemma}\label{lem:monotone}
Let $\mathcal{A}\subseteq2^{[n]}$ be $r$-wise intersecting.
Then $\mathcal{A}^\uparrow$ is $r$-wise intersecting and
$\mu_{\mathbf p}(\mathcal{A}^\uparrow)\ge\mu_{\mathbf p}(\mathcal{A})$.
If $\mathcal{A}$ is increasing and a coordinate probability $p_j$ is increased, then
$\mu_{\mathbf p}(\mathcal{A})$ cannot decrease.  More precisely, writing
\[
 \mathcal{A}_0=\{A\subseteq[n]\setminus\{j\}:A\in\mathcal{A}\},\qquad
 \mathcal{A}_1=\{A\subseteq[n]\setminus\{j\}:A\cup\{j\}\in\mathcal{A}\},
\]
we have $\mathcal{A}_0\subseteq\mathcal{A}_1$, and the measure is strictly increasing in
$p_j$ unless $\mathcal{A}_0=\mathcal{A}_1$.
\end{lemma}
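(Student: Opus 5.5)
The lemma has three parts, and I will prove them in order: the upset preserves $r$-wise intersection, the upset has at least the original measure, and the measure of an increasing family is monotone in each $p_j$.

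\textbf{Intersection property.} Take $B_1,\ldots,B_r\in\mathcal A^\uparrow$. By definition there are $A_1,\ldots,A_r\in\mathcal A$ with $A_i\subseteq B_i$. Since $\mathcal A$ is $r$-wise intersecting, $\emptyset\neq A_1\cap\cdots\cap A_r$. This set is contained in $B_1\cap\cdots\cap B_r$, so the $B_i$ also have nonempty intersection.

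\textbf{Measure of the upset.} Every $A\in\mathcal A$ satisfies $A\subseteq A$, so $\mathcal A\subseteq\mathcal A^\uparrow$. Every summand in the definition of $\mu_{\mathbf p}$ is nonnegative, hence $\mu_{\mathbf p}(\mathcal A^\uparrow)\ge\mu_{\mathbf p}(\mathcal A)$.

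\textbf{Monotonicity in $p_j$.} Let $\mathcal A$ be increasing and fix $j$.

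First, $\mathcal A_0\subseteq\mathcal A_1$. If $A\subseteq[n]\setminus\{j\}$ lies in $\mathcal A$, then $A\cup\{j\}\supseteq A$ also lies in $\mathcal A$, because $\mathcal A$ is increasing. So $A\in\mathcal A_1$.

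Next, split $\mathcal A$ according to whether a set contains $j$. Write $\mu'$ for the product measure on $2^{[n]\setminus\{j\}}$ with the remaining probabilities $p_i$, $i\ne j$. Factoring the $j$-th coordinate out of each product gives
\[
\mu_{\mathbf p}(\mathcal A)=p_j\,\mu'(\mathcal A_1)+(1-p_j)\,\mu'(\mathcal A_0)=\mu'(\mathcal A_0)+p_j\bigl(\mu'(\mathcal A_1)-\mu'(\mathcal A_0)\bigr).
\]
Here $\mu'(\mathcal A_0)$ and $\mu'(\mathcal A_1)$ do not depend on $p_j$. So $\mu_{\mathbf p}(\mathcal A)$ is an affine function of $p_j$ with slope $\mu'(\mathcal A_1\setminus\mathcal A_0)\ge 0$, using $\mathcal A_0\subseteq\mathcal A_1$. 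Hence increasing $p_j$ cannot decrease the measure.

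\textbf{Strictness.} All $p_i\in(0,1)$, so every set has strictly positive $\mu'$-weight. The slope $\mu'(\mathcal A_1\setminus\mathcal A_0)$ is therefore positive exactly when $\mathcal A_1\setminus\mathcal A_0\neq\emptyset$, i.e. when $\mathcal A_0\neq\mathcal A_1$. In that case the measure is strictly increasing in $p_j$.

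None of these steps is a real obstacle. The only care needed is to state the decomposition over the $j$-th coordinate explicitly and to use positivity of all the weights for the strictness claim.
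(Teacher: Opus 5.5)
Your proposal is correct and follows essentially the same route as the paper: you pull back the common point of the $A_i\subseteq B_i$, use $\mathcal A\subseteq\mathcal A^\uparrow$, and decompose $\mu_{\mathbf p}(\mathcal A)=(1-p_j)\mu'(\mathcal A_0)+p_j\mu'(\mathcal A_1)$. From this you read off the slope $\mu'(\mathcal A_1\setminus\mathcal A_0)\ge 0$, which is strictly positive unless $\mathcal A_0=\mathcal A_1$ because every atom has positive weight.
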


\begin{proof}
If $B_1,\ldots,B_r\in\mathcal{A}^\uparrow$, choose $A_i\in\mathcal{A}$ with
$A_i\subseteq B_i$.  Then
$$
 \emptyset\ne A_1\cap\cdots\cap A_r\subseteq B_1\cap\cdots\cap B_r.
$$
Hence, $\mathcal{A}^\uparrow$ is $r$-wise intersecting.  For the measure inequality, observe that $\mathcal A\subseteq\mathcal A^{\uparrow}$. Since every atom of the product measure has positive weight, adding sets to a family can only increase its measure. It follows that $\mu_{\mathbf p}(\mathcal A^{\uparrow})\ge\mu_{\mathbf p}(\mathcal A)$.

For monotonicity, condition on all coordinates except $j$.  If
$\mathbf p_{-j}$ denotes the remaining probability vector, then
\[
 \mu_{\mathbf p}(\mathcal{A})=(1-p_j)\mu_{\mathbf p_{-j}}(\mathcal{A}_0)
        +p_j\mu_{\mathbf p_{-j}}(\mathcal{A}_1).
\]
Since $\mathcal{A}$ is increasing, $\mathcal{A}_0\subseteq\mathcal{A}_1$.  Thus, the derivative
with respect to $p_j$ is
$\mu_{\mathbf p_{-j}}(\mathcal{A}_1\setminus\mathcal{A}_0)\ge0$, and it is zero only when
$\mathcal{A}_1=\mathcal{A}_0$, because all atoms have positive measure.
\end{proof}

Families $\mathcal F_1,\ldots,\mathcal F_r\subseteq 2^{[N]}$ are \textit{$r$-cross intersecting} if $F_1\cap\cdots\cap F_r\neq\emptyset$ for all $F_i\in\mathcal F_i$. The following lemma is the key additive estimate for such families.
The next lemma is the critical additive estimate which absorbs all coordinates
whose probabilities are at most $\frac{r-1}{r}$.

\begin{lemma}[Critical additive bound]\label{lem:additive}
Let $r\ge2$ and let $\mathbf p=(p_1,\ldots,p_N)$ satisfy
$0\le p_j\le\frac{r-1}{r}$ for every $j$. If
$\mathcal{F}_1,\ldots,\mathcal{F}_r\subseteq2^{[N]}$ are $r$-cross intersecting, then
$$
 \sum_{i=1}^r\mu_{\mathbf p}(\mathcal{F}_i)\le r-1.
$$
\end{lemma}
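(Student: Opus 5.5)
The plan is to reduce to the case where every $p_j$ equals the critical value $\frac{r-1}{r}$, and then to use an explicit coupling of $r$ random sets whose intersection is always empty.

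\textbf{Step 1 (monotonicity reduction).} First I replace each $\mathcal F_i$ by its upset $\mathcal F_i^\uparrow$.
\begin{itemize}
\item The argument of Lemma~\ref{lem:monotone} works verbatim in the cross setting. If $B_i\supseteq A_i$ with $A_i\in\mathcal F_i$, then $\emptyset\ne A_1\cap\cdots\cap A_r\subseteq B_1\cap\cdots\cap B_r$. So the upsets are still $r$-cross intersecting.
\item Each measure $\mu_{\mathbf p}(\mathcal F_i)$ can only increase under this replacement.
\end{itemize}
Hence we may assume every $\mathcal F_i$ is increasing. Now raise each coordinate probability $p_j$ to $q:=\frac{r-1}{r}$, one coordinate at a time. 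Lemma~\ref{lem:monotone} shows that no $\mu(\mathcal F_i)$ decreases at any step. Its strict-positivity hypothesis is irrelevant here, because the derivative formula $\mu(\mathcal A_1\setminus\mathcal A_0)\ge 0$ stays valid when some $p_j=0$. It therefore suffices to prove $\sum_i\mu_q(\mathcal F_i)\le r-1$ for the uniform measure $\mu_q$ with $q=\frac{r-1}{r}$.

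\textbf{Step 2 (random partition coupling).} Independently for each $j\in[N]$, choose a colour $c_j\in[r]$ uniformly at random. Set
\[
 F_i=[N]\setminus\{j: c_j=i\}\qquad (i=1,\ldots,r).
\]
Each coordinate $j$ lies in $F_i$ with probability $1-\frac1r=q$, independently over $j$. So each $F_i$ individually has distribution $\mu_q$, and therefore $\Pr[F_i\in\mathcal F_i]=\mu_q(\mathcal F_i)$.

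On the other hand, every $j$ is missing from $F_{c_j}$, so $F_1\cap\cdots\cap F_r=\emptyset$ always; this includes the degenerate case $N=0$. Because the families are $r$-cross intersecting, the events $\{F_i\in\mathcal F_i\}$ can never all occur simultaneously. Pointwise this gives $\sum_{i=1}^r \mathbf 1[F_i\in\mathcal F_i]\le r-1$. Taking expectations yields
\[
\sum_{i=1}^r\mu_q(\mathcal F_i)\le r-1,
\]
and combining with Step 1 proves the lemma.

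\textbf{Main difficulty.} There is essentially no hard step. The only point needing care is Step 1: I must check that the upset and probability-raising reductions go in the right direction for every family simultaneously, including coordinates with $p_j=0$. This holds because each family is handled separately and cross intersection is preserved under taking upsets. The key idea is the choice of the coupling in Step 2: $r$ complements of the colour classes of a uniform $r$-colouring, each of which has exactly the critical marginal $q=\frac{r-1}{r}$.
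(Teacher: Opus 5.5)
Your proof is correct. Your coupling is the paper's coupling in a special case, but you reach it by a different route.

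The paper handles every $p_j\le\frac{r-1}{r}$ directly. It draws a random size $K_j\in\{0,\ldots,r-1\}$ with $\mathbb E[K_j]=rp_j$, then a uniform $K_j$-subset $S_j\subsetneq[r]$, and sets $X_i=\{j:i\in S_j\}$. Your choice $S_j=[r]\setminus\{c_j\}$ is exactly the case $K_j\equiv r-1$. You make that case sufficient by first passing to upsets and raising every probability to $\frac{r-1}{r}$.

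Your route buys a simpler coupling at the cost of this reduction step. There you must check that cross intersection survives taking upsets, and that non-strict monotonicity holds even when $p_j=0$. Also, Lemma~\ref{lem:monotone} is stated for an $r$-wise intersecting family, which the individual $\mathcal F_i$ need not be. So you are really using its proof, which never needs that hypothesis, rather than its statement.

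The paper's route needs no reduction and applies verbatim to non-increasing families. One can realise its $S_j$ as a random subset of your $[r]\setminus\{c_j\}$, so lowering the marginals only shrinks the sets $X_i$. Shrinking preserves $X_1\cap\cdots\cap X_r=\emptyset$, so the monotonicity you impose on the families is absorbed for free into the coupling.
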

\begin{proof}

For each coordinate $j\in[N]$,  independently choose a random proper subset
$S_j\subsetneq[r]$ such that
$$
 \mathbb P(i\in S_j)=p_j,\qquad\text{for every }i\in[r].
$$
To construct such $S_j$, first choose a random integer $K_j\in\{0,1,\ldots,r-1\}$ with expectation
$
\mathbb E[K_j]=r p_j.
$
This is possible because $0\le r p_j\le r-1$, so the desired expectation lies in the interval $[0,r-1]$. Condition on  $K_j=k$, choose $S_j$ uniformly from all $k$-element subsets of $[r]$. Then for each fixed $i\in[r]$,
$$
\mathbb P(i\in S_j)
=\sum_{k=0}^{r-1}\mathbb P(K_j=k)\cdot \mathbb P(i\in S_j\mid K_j=k)
=\sum_{k=0}^{r-1}\mathbb P(K_j=k)\cdot \frac{k}{r}
=\frac{\mathbb E[K_j]}{r}
=p_j.
$$

Now define random sets
$$
X_i=\{j\in[N]: i\in S_j\},\qquad i=1,\ldots,r.
$$
For each fixed $i$, the coordinate $j$ belongs to $X_i$ with probability $p_j$, independently across different $j$. Hence, $X_i$ has distribution $\mu_{\mathbf p}$, and therefore
$
\mathbb P(X_i\in\mathcal F_i)=\mu_{\mathbf p}(\mathcal F_i)
$
for each $i\in[r]$.
We claim that the events $\{X_i\in\mathcal F_i\}$ cannot all occur simultaneously. Indeed, for every coordinate $j\in[N]$, the set $S_j$ was chosen to be a proper subset of $[r]$, so there exists at least one index $i\in[r]$ such that $i\notin S_j$. Equivalently, no coordinate $j$ belongs to all of the sets $X_1,\ldots,X_r$. It follows that
$$
X_1\cap X_2\cap\cdots\cap X_r=\emptyset.
$$
 If all events $X_i\in\mathcal F_i$ occurred simultaneously, then the intersection $\bigcap_{i=1}^r X_i$ would be nonempty, since the $\mathcal F_i$ are $r$-cross intersecting. This contradicts the fact that the intersection is always empty. Therefore, 
$$
\sum_{i=1}^r \mathbf 1_{\{X_i\in\mathcal F_i\}}\le r-1,
$$
where $\mathbf 1_{\{X_i\in\mathcal F_i\}}$ is the indicator that $X_i\in\mathcal F_i$.
Taking expectations on both sides gives
$$
\sum_{i=1}^r \mu_{\mathbf p}(\mathcal F_i)=\sum_{i=1}^r \mathbb P(X_i\in\mathcal F_i)\le r-1.
$$
This completes the proof.
\end{proof}

\section{Slicing above the critical threshold}\label{se3}

We now reduce the problem to a small number of coordinates.  Put $$p_*=\frac{r-1}{r},\qquad s=r-1.$$  Let $H\subseteq[n]$ be a set of high coordinates, $|H|=m\le r$,
and let $T=[n]\setminus H$ be the tail.  In the applications, $H$ will be
$\{i:p_i>p_*\}$, and all tail probabilities will be at most $p_*$.

Assume for this section that all coordinates in $H$ have a common probability
$p>p_*$, and write $$q=1-p,\qquad t=\frac{q}{p}.$$  Then $0<t<\frac{1}{s}$.  Let
$\mathcal{A}\subseteq2^{H\cup T}$ be increasing and $r$-wise intersecting.  For
$B\subseteq H$, define the tail section
$$
 \mathcal{A}_B=\{C\subseteq T:B\cup C\in\mathcal{A}\},\qquad
 w_B=\mu_T(\mathcal{A}_B)=\sum_{C\in\mathcal A_B}\prod_{i\in C}p_i\prod_{i\in T\setminus C}(1-p_i),
$$
where $\mu_T$ is the product measure on the tail $T$. We write $\mu$ for the product measure on $2^{H\cup T}$ with respect to the probability vector that assigns probability $p$ to each coordinate in $H$ and probability $p_i$ to each coordinate $i\in T$. 
 Then
\begin{equation}\label{eq:slice-measure}
 \mu(\mathcal{A})=\sum_{B\subseteq H}p^{|B|}q^{m-|B|}w_B.
\end{equation}

\begin{lemma}\label{lem:slice}
If $B_1,\ldots,B_r\subseteq H$ and
$B_1\cap\cdots\cap B_r=\emptyset$, then
$$
 w_{B_1}+\cdots+w_{B_r}\le r-1.
$$
\end{lemma}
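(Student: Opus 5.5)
The plan is to reduce directly to the critical additive bound, Lemma~\ref{lem:additive}, applied on the tail $T$. For each $i\in[r]$ set $\mathcal F_i=\mathcal A_{B_i}\subseteq 2^{T}$, so that $\mu_T(\mathcal F_i)=w_{B_i}$ by definition. Every tail coordinate has probability at most $p_*=\frac{r-1}{r}$, which is the standing assumption of this section. The lemma will then follow at once once we verify that $\mathcal F_1,\ldots,\mathcal F_r$ are $r$-cross intersecting.

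To check cross intersection, take $C_i\in\mathcal F_i$ for $i=1,\ldots,r$. By the definition of the tail section, $B_i\cup C_i\in\mathcal A$ for each $i$. Since $\mathcal A$ is $r$-wise intersecting, the intersection $\bigcap_{i=1}^r (B_i\cup C_i)$ is nonempty. This intersection splits according to $H\cup T$ as
\[
\Bigl(\bigcap_{i=1}^r (B_i\cup C_i)\Bigr)\cap H=\bigcap_{i=1}^r B_i,\qquad
\Bigl(\bigcap_{i=1}^r (B_i\cup C_i)\Bigr)\cap T=\bigcap_{i=1}^r C_i .
\]
The first part holds because $B_i\subseteq H$ and $C_i\subseteq T$ with $H\cap T=\emptyset$. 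The $H$-part is empty by hypothesis, so $C_1\cap\cdots\cap C_r\neq\emptyset$. If some $\mathcal F_i$ is empty, the cross-intersecting condition holds vacuously. Repeated indices $B_i=B_j$ cause no trouble either, since the $r$-wise condition allows repeated sets.

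Applying Lemma~\ref{lem:additive} with $N=|T|$ and the tail probabilities then gives $\sum_i w_{B_i}\le r-1$. I expect no real obstacle here. The only point needing care is confirming that the tail probabilities are indeed at most $\frac{r-1}{r}$, which is the hypothesis under which this section operates. Neither monotonicity of $\mathcal A$ nor the common value $p$ on $H$ is needed for this step.
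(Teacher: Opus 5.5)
Your proposal is correct and matches the paper's proof: the tail sections $\mathcal A_{B_1},\ldots,\mathcal A_{B_r}$ are $r$-cross intersecting because the common point of the sets $B_i\cup C_i$ cannot lie in $H$, and Lemma~\ref{lem:additive} applied on the tail then gives the bound. Your extra remarks are also accurate: empty sections are handled vacuously, repeated $B_i$ are allowed, and neither monotonicity nor the common value $p$ on $H$ is used.
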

\begin{proof}
For every $C_i\in\mathcal{A}_{B_i}$, the set $B_i\cup C_i$ belong to $\mathcal{A}$.  Since
$\mathcal{A}$ is $r$-wise intersecting and $B_1\cap\cdots\cap B_r=\emptyset$, the
common point of $B_i\cup C_i$ must lie in the tail $T$.  Hence,
$C_1\cap\cdots\cap C_r\ne\emptyset$.  Therefore,
$\mathcal{A}_{B_1},\ldots,\mathcal{A}_{B_r}$ are $r$-cross-intersecting on the tail.
Since the tail probabilities are all at most $p_*$,  Lemma \ref{lem:additive} gives
the desired inequality.
\end{proof}

We shall use a small random partition identity.

\begin{lemma}[Partition law]\label{lem:partition}
Let $S$ be a finite set with $1\leq |S|=k\le s$ and let $0<t\le\frac{1}{s}$.  There is a
probability distribution on set partitions $\Pi$ of $S$ (with nonempty blocks only) such that, for every
nonempty $D\subseteq S$,
$$
 \mathbb P(D\in\Pi)=t^{|D|-1}(1-t)^{k-|D|}.
$$
\end{lemma}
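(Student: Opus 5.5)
The plan is to build $\Pi$ by an explicit sequential ``restaurant'' process and then compute the block probabilities directly, using exchangeability.

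\textbf{Construction.} Fix an ordering $x_1,\ldots,x_k$ of $S$ and insert the elements one at a time. Element $x_1$ opens a block. Suppose elements $x_1,\ldots,x_i$ have been placed and currently form $b$ blocks. Then $x_{i+1}$ does the following:
\begin{itemize}
\item it joins each particular existing block with probability $t$;
\item it opens a new block with probability $1-bt$.
\end{itemize}
This is a legitimate distribution because $b\le i\le k-1\le s-1$, so $bt\le (s-1)/s<1$. Here we use $k\le s$ and $t\le 1/s$; this is the only place the hypothesis enters. Multiplying the step probabilities, a partition $\pi$ with $\ell$ blocks has probability
\[
\mathbb P(\Pi=\pi)=t^{\,k-\ell}\prod_{j=1}^{\ell-1}(1-jt).
\]
The reason is that every ``join'' step contributes a factor $t$, whatever the block sizes. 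The $j$-th time a new block is opened after the first, there are exactly $j$ blocks present, giving the factor $1-jt$.

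\textbf{Exchangeability.} This formula depends only on the number of blocks of $\pi$, not on the ordering of $S$. Hence the law of $\Pi$ is invariant under permutations of $S$. (Small cases confirm it: for $k=3$ the weights are $t^2$, $t(1-t)$ three times, and $(1-t)(1-2t)$, and these sum to $1$.)

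\textbf{Computing $\mathbb P(D\in\Pi)$.} Let $D$ be nonempty with $|D|=d$. By exchangeability we may choose the insertion order so that $D=\{x_1,\ldots,x_d\}$. The event $D\in\Pi$ is then the intersection of two requirements.
\begin{itemize}
\item Each of $x_2,\ldots,x_d$ joins the block of $x_1$. At every one of these steps only one block exists, so each succeeds with probability $t$. Together they contribute $t^{d-1}$.
\item Each later element $x_{d+1},\ldots,x_k$ avoids the block $D$. Whatever the current configuration of other blocks, the probability of joining the specific block $D$ is exactly $t$, so the conditional probability of avoiding it is $1-t$ at every step. Together they contribute $(1-t)^{k-d}$.
\end{itemize}
Multiplying gives $\mathbb P(D\in\Pi)=t^{d-1}(1-t)^{k-d}$, as required.

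The only points needing care are the nonnegativity of $1-bt$, which is handled by $(k-1)t<1$, and the exchangeability claim. I expect the latter to be the main point to state cleanly. I would justify it through the closed formula for $\mathbb P(\Pi=\pi)$ rather than by a coupling argument.
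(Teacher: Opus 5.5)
Your proof is correct. It produces exactly the distribution $\mathbb P(\Pi=\pi)=t^{k-|\pi|}\prod_{j=1}^{|\pi|-1}(1-jt)$ that the paper uses, but it gets there by a different route.

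The paper defines $\Pi$ directly by this closed formula, so symmetry under permutations of $S$ is built in. It then checks normalization and the block marginal algebraically, using the expansion $y^a=\sum_{h}\genfrac{\{}{\}}{0pt}{}{a}{h}(y)_h$ of powers into falling factorials twice: once after grouping all partitions by number of blocks, and once, with $y=1/t-1$, after fixing $D$ as a block and summing over the partitions of $S\setminus D$.

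You instead generate $\Pi$ by a sequential restaurant process. This gives normalization for free, since it is the law of a well-defined process whose transition probabilities are nonnegative because $(k-1)t<1$. It also turns $\mathbb P(D\in\Pi)$ into a one-line chain-rule product once $D$ is placed at the front of the insertion order. The cost is the exchangeability step, which you handle correctly: the closed formula, and hence the law, does not depend on the insertion order.

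In effect your argument is a probabilistic proof of the Stirling identity the paper invokes. It also makes the distribution transparent: for $t=1/m$ with $m$ an integer, it is the partition of $S$ induced by colouring each element independently and uniformly with one of $m$ colours. The paper's version is quicker to state but relies on that external identity.
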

\begin{proof}

For a partition $\pi$ of $S$, let $|\pi|$ denote its number of blocks. Define
$$
\mathbb P(\Pi=\pi)=t^{k-|\pi|}\prod_{j=1}^{|\pi|-1}(1-jt). 
$$
The factors are nonnegative because $|\pi|-1\le k-1\le s-1$ and
$t\le \frac{1}{s}$.

We first verify normalization. Let $\genfrac{\{}{\}}{0pt}{}{k}{\ell}$ denote the Stirling number of the second kind, i.e., the number of partitions of a $k$-element set into exactly $\ell$ nonempty blocks. Grouping partitions by $\ell=|\pi|$, we have
$$
\sum_\pi\mathbb P(\Pi=\pi)
=\sum_{\ell=1}^k \genfrac{\{}{\}}{0pt}{}{k}{\ell} t^{k-\ell}\prod_{j=1}^{\ell-1}(1-jt).
$$
Put $x=\frac{1}{t}$. Since $1-jt=\frac{x-j}{x}$ and $t^{k-\ell}\cdot t^{\ell-1}=t^{k-1}$, we get
$$
\sum_\pi\mathbb P(\Pi=\pi)
=t^{k-1}\sum_{\ell=1}^k \genfrac{\{}{\}}{0pt}{}{k}{\ell}(x-1)_{\ell-1},
$$
where $(y)_{a}=y(y-1)\cdots(y-a+1)$. We use the standard Stirling identity 
$$
 x^k=\sum_{\ell=1}^k\genfrac{\{}{\}}{0pt}{}{k}{\ell} (x)_{\ell}=x\sum_{\ell=1}^k\genfrac{\{}{\}}{0pt}{}{k}{\ell}(x-1)_{\ell-1}.
$$
 Hence, 
$$
\sum_\pi\mathbb P(\Pi=\pi)=t^{k-1}x^{k-1}=t^{k-1}\cdot t^{-(k-1)}=1.
$$

Now fix a nonempty subset $D\subseteq S$, and let $d=|D|$ and $a=k-d$. If $D$ appears as a block of $\Pi$, the remaining $a$ elements of $S\setminus D$ are partitioned into, say, $h$ nonempty blocks. Summing over all possible partitions of $S\setminus D$, we obtain
$$
\mathbb P(D\in\Pi)
=\sum_{h=0}^a \genfrac{\{}{\}}{0pt}{}{a}{h}
t^{k-(h+1)}\prod_{j=1}^{h}(1-jt)
=t^{d-1}\sum_{h=0}^a \genfrac{\{}{\}}{0pt}{}{a}{h}
t^{a-h}\prod_{j=1}^{h}(1-jt),
$$
where the term $h=0$ corresponds to the case $S\setminus D=\emptyset$, in which the product is empty and equals $1$. 
Again using $x=\frac{1}{t}$, we have $t^{a-h}\prod_{j=1}^{h}(1-jt)=t^a(x-1)_h$. Thus,
\[
\mathbb P(D\in\Pi)
=t^{d-1+a}\sum_{h=0}^a  \genfrac{\{}{\}}{0pt}{}{a}{h}(x-1)_h
=t^{d-1+a}(x-1)^a=t^{d-1}(1-t)^{k-d},
\]
where the second equality from the  Stirling identity.
This completes the proof.
\end{proof}

\begin{prop}[Supercritical slice bound]\label{prop:slicebound}
In the setting of this section, we have
$$
 \mu(\mathcal{A})\le p-p\left(1-(r-1)\frac{q}{p}\right)(1-w_H).
$$
In particular $\mu(\mathcal{A})\le p$.  Moreover, equality $\mu(\mathcal{A})=p$ forces
$w_H=1$, equivalently $\mathcal{A}_H=2^T$.
\end{prop}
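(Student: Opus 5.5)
The plan is to write everything in terms of the slice weights and reduce the claim to a weighted sum of instances of Lemma \ref{lem:slice}, with the weights supplied by Lemma \ref{lem:partition}. By \eqref{eq:slice-measure}, substituting $B=H\setminus D$ gives
\[
\mu(\mathcal A)=p^m w_H+\sum_{\emptyset\ne D\subseteq H}p^{m}t^{|D|}\,w_{H\setminus D}.
\]
The target is equivalent to
\[
\mu(\mathcal A)\le (r-1)q+\bigl(p-(r-1)q\bigr)w_H .
\]
Since $w_H\le 1$ and $p-(r-1)q=p(1-st)>0$, the consequences $\mu(\mathcal A)\le p$ and ``equality forces $w_H=1$'' follow at once. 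The equivalence $w_H=1\iff\mathcal A_H=2^T$ holds because all atoms of $\mu_T$ have positive weight.

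The basic inequality comes from set partitions. Let $\pi=\{D_1,\dots,D_\ell\}$ be a partition of a nonempty $S\subseteq H$ with $\ell\le r$ blocks. The sets $B_i=H\setminus D_i$ satisfy $\bigcap_i B_i=H\setminus S$, which is empty if $S=H$. I pad with $r-\ell$ copies of $H\setminus S$ (or of $H$ when $S=H$ is not needed). Since $\mathcal A$ is increasing, $w$ is monotone in $B$, so padding copies can be bounded below. Then Lemma \ref{lem:slice} gives
\[
\sum_{i=1}^{\ell}w_{H\setminus D_i}\le r-1-(r-\ell)\,w_{H}
\]
when $S=H$. When $S\ne H$, a cruder version applies with $w_{H\setminus S}$ replacing the padding.

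I then average over the random partition $\Pi$ of Lemma \ref{lem:partition} with $S=H$. The proof of that lemma only needs the factors $1-jt$ for $j\le k-1$ to be nonnegative, and $t<1/s$ makes this valid for $k\le r$, so the case $m=r$ is allowed. Lemma \ref{lem:partition} gives
\[
\mathbb E\sum_{D\in\Pi}w_{H\setminus D}=\sum_{D\ne\emptyset}t^{|D|-1}(1-t)^{m-|D|}w_{H\setminus D}.
\]
In addition, $\mathbb E|\Pi|=\sum_D t^{|D|-1}(1-t)^{m-|D|}$, which I will evaluate in closed form by binomial summation.

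The remaining step, which I expect to be the main obstacle, is matching the coefficients $p^m t^{|D|}$ in $\mu(\mathcal A)$ against the coefficients $t^{|D|-1}(1-t)^{m-|D|}$ produced by the partition law. The extra factor $(1-t)^{m-|D|}$ does not cancel directly. The first thing I will try is to apply the averaged inequality not only for $S=H$ but for every subset $S\subseteq H$, with nonnegative multipliers $c_S$ chosen so that
\[
\sum_{S\supseteq D}c_S\,t^{|D|-1}(1-t)^{|S|-|D|}=p^m t^{|D|}.
\]
This is an upper-triangular system that can be solved by Möbius inversion over $S$. Any leftover negative part would be absorbed using monotonicity $w_{H\setminus D}\le w_{H\setminus D'}$ for $D'\subseteq D$.

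After that, I will sum the right-hand sides and check that the constant term equals $(r-1)q$ and the coefficient of $w_H$ equals $p-(r-1)q$. This is a routine binomial-identity verification, using $q=pt$ and $p+q=1$. If the multipliers turn out to be signed, the fallback is induction on $m$ via the slices $S\subsetneq H$, since each of those is a smaller instance of the same inequality.
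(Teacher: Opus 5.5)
Your reformulation $\mu(\mathcal A)\le (r-1)q+(p-(r-1)q)w_H$ is correct. So are your $S=H$ inequalities $\sum_{D\in\pi}w_{H\setminus D}+(r-|\pi|)w_H\le r-1$ for partitions $\pi$ of $H$; they are exactly the inequalities the paper uses. The gap is in the combination step, which is the whole content of the proposition.

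Your $S\ne H$ inequalities are false. If $D_1,\dots,D_\ell$ partition $S\subsetneq H$, then the sets $H\setminus D_i$ and the copies of $H\setminus S$ all contain $H\setminus S\neq\emptyset$, so Lemma~\ref{lem:slice} does not apply. For the star $\mathcal S_j$ with $j\in H\setminus S$, the left side equals $r$. Closing such a configuration requires a set inside $S$ (for instance $S$ itself), and that just completes $\pi$ to a partition of $H$. So your M\"obius system over $S$ rests on invalid inequalities. (It does have the nonnegative solution $c_S=p^mt^{m-|S|+1}$, so signs were never the real issue.)

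A single average of the partition law on $H$ does not suffice either, even with monotonicity and $0\le w\le 1$. Take $m=2$, $r=3$. The average reads $t(w_\emptyset+2w_H)+(1-t)(w_{\{1\}}+w_{\{2\}}+w_H)\le 2$, whereas the target is $q(w_\emptyset+2w_H)+p(w_{\{1\}}+w_{\{2\}}+w_H)\le 2$. The point $w_\emptyset=0$, $w_{\{1\}}=w_{\{2\}}=w_H=2/(3-t)$ satisfies the average and all monotonicity and box constraints. It violates the target because $t>q$.

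The missing idea is the right mixing measure on partitions of $H$. Distinguish a coordinate $1\in H$ and set $U=H\setminus\{1\}$. Write the block containing $1$ as $H\setminus B$ with $B\subseteq U$. Apply Lemma~\ref{lem:partition} only to $B$, padding with copies of $H$; since $|B|\le r-1$, no extension of the lemma to $k=r$ is needed. This gives
\[
w_B\le\sum_{\emptyset\ne D\subseteq B}t^{|D|-1}(1-t)^{|B|-|D|}\bigl(1-w_{H\setminus D}\bigr)+\Bigl(s-\frac{1-(1-t)^{|B|}}{t}\Bigr)(1-w_H).
\]
Weight this by $tp^{|B|}q^{|U|-|B|}$ and sum over $B\subseteq U$. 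For each fixed $D\subseteq U$, the identity $\sum_{B\colon D\subseteq B\subseteq U}p^{|B|}q^{|U|-|B|}t^{|D|}(1-t)^{|B|-|D|}=q^{|D|}p^{|U|-|D|}$ holds. It matches the coefficients exactly against the split
\[
\mu(\mathcal A)=q\sum_{B\subseteq U}p^{|B|}q^{|U|-|B|}w_B+p\sum_{D\subseteq U}q^{|D|}p^{|U|-|D|}w_{H\setminus D},
\]
which separates sets by whether coordinate $1$ is present. What remains is precisely $\mu(\mathcal A)-p\le -p(1-st)(1-w_H)$.
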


\begin{proof}
Fix a coordinate $1\in H$ and put $U=H\setminus\{1\}$. Since  $|H|=m\le r$, we have $|U|=m-1\le r-1=s$.
For $B,D\subseteq U$, define
$$
 x_B=w_B,
 \qquad y_D=1-w_{H\setminus D}.
$$
We first prove a local inequality.  Fix $B\subseteq U$.  If
$D_1,\ldots,D_s\subseteq B$ and $D_1\cup\cdots\cup D_s=B$, then
$$
 B\cap(H\setminus D_1)\cap\cdots\cap(H\setminus D_s)=\emptyset.
$$
By Lemma \ref{lem:slice},
\begin{equation}\label{eq:local-cover}
 x_B=w_B\le\sum_{j=1}^s\bigl(1-w_{H\setminus D_j}\bigr)
        =\sum_{j=1}^s y_{D_j}.
\end{equation}

For $B\ne\emptyset$, choose the random partition $\Pi$ of $B$ from
Lemma \ref{lem:partition} and add $s-|\Pi|$ empty blocks.  Taking expectations in
\eqref{eq:local-cover}, and using
$$
\sum_{j=1}^s y_{D_j}
=
\sum_{\emptyset\neq D\subseteq B,\ D\in\Pi} y_D
+
(s-|\Pi|)\,y_\emptyset,\quad \mathbb E[|\Pi|]=\sum_{\emptyset\ne D\subseteq B}\mathbb P(D\in\Pi)
          =\frac{1-(1-t)^{|B|}}{t},
$$
we obtain
\begin{align}
 x_B&\le
\mathbb E\left[\sum_{\emptyset\neq D\subseteq B}\mathbf 1_{\{D\in\Pi\}}\,y_D\right]
+
\mathbb E[s-|\Pi|]\,y_\emptyset\notag\\
 &=\sum_{\emptyset\ne D\subseteq B}t^{|D|-1}(1-t)^{|B|-|D|}y_D
     +\left(s-\frac{1-(1-t)^{|B|}}{t}\right)y_\emptyset \notag\\
 &=\sum_{D\subseteq B}t^{-1}t^{|D|}(1-t)^{|B|-|D|}y_D
     -\frac{1-st}{t}y_\emptyset .        \label{eq:local-ineq}
\end{align}

For $B=\emptyset$, take $D_1=\cdots=D_s=\emptyset$. These sets cover $B$. Applying the local inequality \eqref{eq:local-cover} to this cover gives
$
x_\emptyset \le s y_\emptyset,
$
which is precisely \eqref{eq:local-ineq} in the case $B=\emptyset$

Multiply \eqref{eq:local-ineq} by $t p^{|B|}q^{|U|-|B|}$ and sum over
$B\subseteq U$. The left-hand side becomes
$$
t\sum_{B\subseteq U}p^{|B|}q^{|U|-|B|}x_B.
$$
The right-hand side becomes
\begin{align*}
&\sum_{B\subseteq U}\sum_{D\subseteq B}
p^{|B|}q^{|U|-|B|}t^{|D|}(1-t)^{|B|-|D|}y_D-(1-st)y_\emptyset\\
=&\sum_{D\subseteq U}y_D
\sum_{B\supseteq D}
p^{|B|}q^{|U|-|B|}t^{|D|}(1-t)^{|B|-|D|}-(1-st)y_\emptyset.
\end{align*}
 For fixed $D\subseteq U$, write $B=D\cup E$ with $E\subseteq U\setminus D$. Then
\begin{align*}
 &\sum_{B\supseteq D}
p^{|B|}q^{|U|-|B|}t^{|D|}(1-t)^{|B|-|D|}
=
p^{|D|}t^{|D|}
\sum_{E\subseteq U\setminus D}
p^{|E|}q^{|U|-|D|-|E|}(1-t)^{|E|}\\
=&
p^{|D|}t^{|D|}
\bigl(p(1-t)+q\bigr)^{|U|-|D|}
=
p^{|D|}t^{|D|}
p^{|U|-|D|}
=
q^{|D|}p^{|U|-|D|},
\end{align*}
where we use the binomial theorem, $p(1-t)+q=p$, and $pt=q$.
Therefore, 
\begin{equation}\label{eq:weighted-local}
 t\sum_{B\subseteq U}p^{|B|}q^{|U|-|B|}x_B
 \le \sum_{D\subseteq U}q^{|D|}p^{|U|-|D|}y_D-(1-st)y_\emptyset.
\end{equation}

Now compare $\mathcal{A}$ with the star centred at coordinate $1$.  Using
\eqref{eq:slice-measure}, we obtain
\begin{align*}
 \mu(\mathcal{A})-p
 &=q\sum_{B\subseteq U}p^{|B|}q^{|U|-|B|}x_B
   -p\sum_{D\subseteq U}q^{|D|}p^{|U|-|D|}y_D \\
 &=p\left(
 t\sum_{B\subseteq U}p^{|B|}q^{|U|-|B|}x_B
 -\sum_{D\subseteq U}q^{|D|}p^{|U|-|D|}y_D\right) \\
 &\le -p(1-st)y_\emptyset.
\end{align*}
Since $y_\emptyset=1-w_H$ and $pst=(r-1)q$, the inequality follows.
Observe that $1-st>0$.
If $\mu(\mathcal{A})=p$, then $w_H=1$. Thus, $\mathcal{A}_H$ has tail measure one. Since every tail
atom has positive measure, this is equivalent to $\mathcal{A}_H=2^T$.
\end{proof}

\section{Proof of Theorem \ref{M1}}\label{se4}

The previous proposition gives an upper bound and also identifies a necessary
condition for equality.  We now show that this condition forces the original
family to be contained in a star.

\begin{lemma}[Collapse lemma]
\label{lem:collapse}
Assume the setting of Proposition  \ref{prop:slicebound}.  If $\mu(\mathcal{A})=p$, then there
exists a coordinate $j\in H$ such that
$$
 \mathcal{A}\subseteq\mathcal{S}_j.
$$
\end{lemma}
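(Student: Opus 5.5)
The plan is to turn the equality information $w_H=1$ from Proposition \ref{prop:slicebound} into a set-theoretic restriction on which slices $B\subseteq H$ can carry positive weight. Then I will show that this restriction, together with $\mu(\mathcal A)=p$, forces all such $B$ to share a common point $j$.

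\textbf{Step 1 (the support family).} Since $\mu(\mathcal A)=p$, Proposition \ref{prop:slicebound} gives $\mathcal A_H=2^T$, so in particular $H\in\mathcal A$. Define the support family
$$\mathcal G=\{B\subseteq H: w_B>0\}=\{B\subseteq H:\mathcal A_B\neq\emptyset\},$$
which consists of the traces $A\cap H$ of members $A\in\mathcal A$. Take any $A_1,\ldots,A_{r-1}\in\mathcal A$. Applying $r$-wise intersection to $H,A_1,\ldots,A_{r-1}$ gives $H\cap A_1\cap\cdots\cap A_{r-1}\neq\emptyset$. Hence $\mathcal G$ is $(r-1)$-wise intersecting on the ground set $H$, where $|H|=m\le r$.

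It suffices to show $\mathcal G\subseteq\{B:j\in B\}$ for some $j\in H$. Indeed, every $A\in\mathcal A$ has $A\cap H\in\mathcal G$, so this gives $\mathcal A\subseteq\mathcal S_j$. Suppose instead that for every $j\in H$ there is some $B_j\in\mathcal G$ with $j\notin B_j$.

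\textbf{Step 2 (case $m\le r-1$).} The at most $r-1$ sets $B_j$, padded by repetition to exactly $r-1$ sets, have empty intersection. This contradicts $(r-1)$-wise intersection, so we are done in this case.

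\textbf{Step 3 (case $m=r$).} First I claim every $B\in\mathcal G$ satisfies $|B|\ge r-1$. If $|B|\le r-2$, consider $B$ together with the sets $B_j$ for $j\in B$. This is at most $r-1$ sets, and their intersection is contained in $B\setminus B=\emptyset$, again a contradiction.

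Next, using $w_B\le1$ and \eqref{eq:slice-measure},
$$\mu(\mathcal A)\le\sum_{|B|\ge r-1}p^{|B|}q^{r-|B|}=p^r+rp^{r-1}q=p^{r-1}\bigl(r-(r-1)p\bigr).$$
Let $f(p)=p^{r-2}(r-(r-1)p)$, so that the bound equals $p\,f(p)$. Then $f'(p)=(r-1)(r-2)p^{r-3}(1-p)\ge0$ and $f(1)=1$. Hence $f(p)<1$ for $p<1$: for $r\ge4$ because $f$ is strictly increasing on $(0,1)$, and for $r=3$ directly since $f(p)=3-2p>1$ fails only if $p<1$ gives $f(p)=3-2p$. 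I must double-check that last case: for $r=3$ we have $f(p)=3-2p>1$. So the crude bound alone fails when $r=3$ and needs refinement there.

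\textbf{The main obstacle} is therefore this last estimate, which must give $\mu(\mathcal A)<p$ uniformly in $r\ge3$. For $r=3$, the crude bound $w_B\le1$ on the sets $H\setminus\{j\}$ is too weak, and I would use Lemma \ref{lem:slice} instead. The $r$ sets $H\setminus\{j\}$ have empty intersection, so
$$\sum_j w_{H\setminus\{j\}}\le r-1.$$
This yields
$$\mu(\mathcal A)\le p^r+(r-1)p^{r-1}q=p^{r-1}\bigl(1+(r-2)q\bigr).$$
Set $g(p)=p^{r-2}\bigl(1+(r-2)(1-p)\bigr)$, so the bound is $p\,g(p)$. Then $g(1)=1$ and
$$g'(p)=(r-2)p^{r-3}(r-1)(1-p)>0,$$
so $g(p)<1$ for all $p<1$ and all $r\ge3$. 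This gives $\mu(\mathcal A)<p$, a contradiction. Therefore $\mathcal G$ lies in a star at some $j\in H$, and $\mathcal A\subseteq\mathcal S_j$.
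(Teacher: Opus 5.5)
Your proof is correct and follows the paper's route: $H\in\mathcal A$ makes the trace family $(r-1)$-wise intersecting, and the case $|H|\le r-1$ is immediate. For $|H|=r$ you confine the support to slices of size at least $r-1$, then apply Lemma \ref{lem:slice} to the $r$ sets $H\setminus\{j\}$ to get $\mu(\mathcal A)\le p^{r-1}\bigl((r-1)-(r-2)p\bigr)<p$, exactly as the paper does. Your one-shot argument pairing $B$ with the $B_j$, $j\in B$, is a slightly tidier version of the paper's derivation of $\mathcal W=\{H\}\cup\{H\setminus\{h\}:h\in H\}$.

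Do delete the exploratory claim about the crude bound, which is false for every $r$. In fact $f'(p)=p^{r-3}\bigl(r(r-2)-(r-1)^2p\bigr)$ is negative near $p=1$; the formula you wrote is $g'$. Also, for $r=3$ one has $f(p)=3p-2p^2$, not $3-2p$. So $p^r+rp^{r-1}q>p$ for $p$ close to $1$ whenever $r\ge3$, and the refinement via Lemma \ref{lem:slice} is needed in all cases. This is harmless, since you carry the refinement out uniformly in $r$.
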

\begin{proof}
By Proposition \ref{prop:slicebound}, equality implies $w_H=1$.  Hence,
$\mathcal{A}_H=2^T$, and in particular the set $H$ itself belongs to $\mathcal{A}$.
Let
$$
 \mathcal{W}=\{B\subseteq H:\mathcal{A}_B\ne\emptyset\}.
$$
Since $\mathcal{A}$ is increasing, $\mathcal{W}$ is an upset in $2^H$.  Since
$H\in\mathcal{A}$, any $r-1$ members of $\mathcal{A}$, together with $H$, must have a
common point in $H$.  Therefore, $\mathcal{W}$ is $(r-1)$-wise intersecting:
\begin{equation}\label{eq:W-rminus1-wise}
 B_1\cap\cdots\cap B_{r-1}\ne\emptyset
 \qquad\text{for all }B_1,\ldots,B_{r-1}\in\mathcal{W}.
\end{equation}

We claim that $\mathcal{W}$ has a common point.  
When $|H|\le r-1$, this is immediate.  Indeed, if $\bigcap_{B\in\mathcal{W}}B=\emptyset$, then for each point of $H$, we can
choose one member of $\mathcal{W}$ missing it. These at most $r-1$ members would have
empty intersection, contradicting \eqref{eq:W-rminus1-wise}.

It remains to exclude the case $|H|=r$ and
$\bigcap_{B\in\mathcal{W}}B=\emptyset$.  For each $h\in H$, choose
$B_h\in\mathcal{W}$ with $h\notin B_h$.  If $B_h$ also missed some $k\ne h$, then
the $r-1$ sets consisting of $B_h$ and $B_\ell$ for all
$\ell\in H\setminus\{h,k\}$ would have empty intersection, contradicting
\eqref{eq:W-rminus1-wise}.  Hence, $B_h=H\setminus\{h\}$ for every $h\in H$.
The same argument shows that no member of $\mathcal W$ can miss two points, since such a set together with the appropriate $H\setminus\{h\}$'s would form $r-1$ sets with empty intersection, contradicting \eqref{eq:W-rminus1-wise}.
Since
$\mathcal{W}$ is an upset, we get
\begin{equation}\label{eq:threshold-support}
\mathcal{W}=\{H\}\cup\{H\setminus\{h\}:h\in H\}.
\end{equation}

For $h\in H$, the tail families $\mathcal{A}_{H\setminus\{h\}}$ are
$r$-cross intersecting, because the high parts $H\setminus\{h\}$ have empty intersection when $h$ ranges over $H$.  By Lemma \ref{lem:additive}, we have
$$
 \sum_{h\in H}w_{H\setminus\{h\}}\le r-1.
$$
Using \eqref{eq:threshold-support} and $w_H=1$, we obtain
$$
 \mu(\mathcal{A})
 \le p^r+p^{r-1}q\sum_{h\in H}w_{H\setminus\{h\}}
 \le p^r+(r-1)p^{r-1}q.
$$
For $\frac{r-1}{r}=p_*<p<1$, we have
$$
 p^r+(r-1)p^{r-1}q=p^{r-1}\bigl((r-1)-(r-2)p\bigr)<p.
$$
Indeed, $p^{r-2}((r-1)-(r-2)p)$ is strictly increasing on $(0,1)$ and equals
$1$ at $p=1$.  This contradicts $\mu(\mathcal{A})=p$.

Therefore, $\bigcap_{B\in\mathcal{W}}B\ne\emptyset$.  Choose
$j\in\bigcap_{B\in\mathcal{W}}B$.  Every set in $\mathcal{A}$ has high part in $\mathcal{W}$, and
hence contains $j$.  Thus, $\mathcal{A}\subseteq\mathcal{S}_j$.
\end{proof}

With this collapse lemma in hand, we are ready to prove the main theorem.

\begin{proof}[Proof of Theorem \ref{M1}]
The stars $\mathcal S_i$ with $p_i=p_1$ are $r$-wise intersecting and satisfy
$\mu_{\mathbf p}(\mathcal S_i)=p_1$.
Let $p_*=\frac{r-1}{r}$.
We first prove the theorem for increasing families.  Let $\mathcal A$ be increasing
and $r$-wise intersecting.

\smallskip
\noindent\textit{Case 1: $p_1<p_*$.}
Then $p_2<p_*$. Applying Theorem \ref{T22} directly gives both
$\mu_{\mathbf p}(\mathcal A)\le p_1$ and the stated equality classification.

\smallskip
\noindent\textit{Case 2: $p_1=p_*$.}
All coordinates have probability at most $p_*$.  Applying
Lemma~\ref{lem:additive} to the $r$ identical families
$\mathcal A,\ldots,\mathcal A$ gives
$$
 r\mu_{\mathbf p}(\mathcal A)\le r-1.
$$
Hence, $\mu_{\mathbf p}(\mathcal A)\le p_*=p_1$.

Assume equality holds.  Let $j$ be any coordinate with $p_j<p_*$.  Raise
only $p_j$ to $p_*$ and leave all other probabilities fixed.  Since all
coordinates are still at most $p_*$, the same additive bound gives measure
at most $p_*$.  Monotonicity and the equality
$\mu_{\mathbf p}(\mathcal A)=p_*$ imply that the measure did not increase.  By
the strict monotonicity part of Lemma \ref{lem:monotone}, $\mathcal A$ is independent of
coordinate $j$.  Repeating this for all $j$ with $p_j<p_*$, we find that
$\mathcal A$ depends only on
$$
 M=\{j:p_j=p_*\}.
$$
The assumption $p_{r+1}<p_*$ gives $|M|\le r$.

As a family on the ground set $M$, $\mathcal A$ must have a common point.  Otherwise,
for each point of $M$, we could choose a member of $\mathcal A$ missing it. Since
$|M|\le r$, after repetitions this would give $r$ members of $\mathcal A$ with
empty intersection.  Thus $\mathcal A\subseteq\mathcal S_j$ for some $j\in M$.  Since
$\mu_{\mathbf p}(\mathcal A)=p_*=\mu_{\mathbf p}(\mathcal S_j)$ and all atoms have
positive measure, we have $\mathcal A=\mathcal S_j$.

\smallskip
\noindent\textit{Case 3: $p_1>p_*$.}
Let
\[
 H=\{j:p_j>p_*\}.
\]
Then $1\le |H|\le r$.  Define a new probability vector $\mathbf p^+$ by raising
every coordinate in $H$ to $p_1$ and leaving the tail coordinates unchanged.
Since $\mathcal A$ is increasing, Lemma \ref{lem:monotone} gives
$$
 \mu_{\mathbf p}(\mathcal A)\le \mu_{\mathbf p^+}(\mathcal A).
$$
The high coordinates of $\mathbf p^+$ have common probability $p_1$, while all
tail probabilities are at most $p_*$.  Hence, Proposition \ref{prop:slicebound} gives
$\mu_{\mathbf p^+}(\mathcal A)\le p_1$.  Thus, $\mu_{\mathbf p}(\mathcal A)\le p_1$.

If equality holds for the original vector $\mathbf p$, then
$\mu_{\mathbf p^+}(\mathcal A)=p_1$.  By Lemma \ref{lem:collapse}, there exists
$j\in H$ such that $\mathcal A\subseteq\mathcal S_j$.  Returning to the original vector,
\[
 p_1=\mu_{\mathbf p}(\mathcal A)\le\mu_{\mathbf p}(\mathcal S_j)=p_j\le p_1.
\]
Therefore, $p_j=p_1$ and 
$\mathcal A=\mathcal S_j$.

This completes the proof for increasing families.

Now let $\mathcal A$ be an arbitrary $r$-wise intersecting family.  By
Lemma~\ref{lem:monotone}, its upset $\mathcal A^\uparrow$ is increasing and
$r$-wise intersecting, and
$$
 \mu_{\mathbf p}(\mathcal A)\le\mu_{\mathbf p}(\mathcal A^\uparrow)\le p_1.
$$
If equality holds, then $\mu_{\mathbf p}(\mathcal A^\uparrow)=p_1$.  The increasing
case gives $\mathcal A^\uparrow=\mathcal S_i$ for some $i$ with $p_i=p_1$.  Since
$\mathcal A\subseteq\mathcal S_i$ and all atoms of $\mathcal S_i$ have positive measure, the
equality $\mu_{\mathbf p}(\mathcal A)=\mu_{\mathbf p}(\mathcal S_i)$ implies
$\mathcal A=\mathcal S_i$.
\end{proof}

\section*{Declaration of competing interest}
We declare that we have no conflict of interest to this work.

\section*{Data availability}
No data was used for the research described in the article.


\end{document}